\newtheorem{theorem}{Theorem}[section]
\newtheorem{lemma}[theorem]{Lemma}
\newtheorem{corollary}[theorem]{Corollary}
\theoremstyle{definition}
\newtheorem{definition}[theorem]{Definition}
\theoremstyle{remark}
\numberwithin{equation}{section}
\begin{document}

\title[$f$-Orthomorphisms and $f$-linear operators on the order dual]
 {$f$-Orthomorphisms and $f$-linear operators on the order dual of
an $f$-algebra}
\author[Y. feng]
{Ying Feng }
\address{Department of Mathematics, Southwest Jiaotong University,
Chengdu 610031, P.R. China}
 \email{fengying@home.swjtu.edu.cn}
\author[J.X. Chen]
{Jin Xi Chen}
\address{Department of Mathematics, Southwest Jiaotong
University, Chengdu 610031, P.R. China}
\email{jinxichen@home.swjtu.edu.cn}
\author[Z.L. Chen]
{Zi Li Chen}
\address{Department of Mathematics, Southwest Jiaotong
University, Chengdu 610031, P.R. China}
\email{zlchen@home.swjtu.edu.cn}

\thanks{The authors were supported in part by the Fundamental Research Funds for the Central Universities (SWJTU11CX154, SWJTU12ZT13)}

%    General info
\subjclass[2000]{Primary 47B65; Secondary 46A40, 06F25}
%\dedicatory{}

\keywords{Riesz space, $f$-algebra, orthomorphism, $f$-
orthomorphism, $f$-linear operator}

\begin{abstract}
In this paper we consider the $f$-orthomorphisms and $f$-linear
operators on the order dual of an $f$-algebra. In particular, when
the $f$-algebra has the factorization property (\,not necessarily
unital\,), we prove that the orthomorphisms, $f$-orthomorphisms and
$f$-linear operators on the order dual are precisely the same class
of operators.

\end{abstract}

\maketitle \baselineskip 5mm

\section{Introduction}

\par Let $A$ be an $f$-algebra with $^{\circ}(A^{\sim})=\{0\}$. Recall that we can define a multiplication
 on $(A^{\sim})^{\sim}_{n}$, the order continuous part of the order bidual
 of $A$, with respect to which $(A^{\sim})^{\sim}_{n}$ can also be made an
 $f$-algebra. This is done in three steps:
\begin{enumerate}
                                         \item $A\times A^{\sim}\rightarrow A^{\sim}$
\\$(a,f)\longmapsto f\cdot a: (f\cdot a)(b)=f(ab)$ for $b\in A$,

                                         \item $(A^{\sim})^{\sim}_{n}\times A^{\sim}\rightarrow A^{\sim}$
\\$(F,f)\longmapsto F\cdot f: (F\cdot f)(a)=F(f\cdot a)$ for $a\in A$,
                                         \item $(A^{\sim})^{\sim}_{n}\times (A^{\sim})^{\sim}_{n}\rightarrow (A^{\sim})^{\sim}_{n}$
\\$(F,G)\longmapsto F\cdot G: (F\cdot G)(f)=F(G\cdot f)$ for $f\in  A^{\sim}$.
\end{enumerate}
With the so-called Arens multiplication defined in Step (3)
$(A^{\sim})^{\sim}_{n}$ is an Archimedean (and hence commutative)
$f$-algebra. Moreover, If $A$ has a multiplicative unit, then
$(A^{\sim})^{\sim}_{n}=(A^{\sim})^{\sim}$, the whole  order bidual
of $A$.  The mapping $V:(A^{\sim})^{\sim}_{n}\rightarrow
\textrm{Orth}(A^{\sim})$ defined by $V(F)=V_{F}$ for all $F\in
(A^{\sim})^{\sim}_{n}$, where $V_{F}(f)=F\cdot f$ for every $f\in
A^{\sim}$, is an algebra and Riesz isomorphism. See \cite {Huijsmans
and de Pagter 1, B. Turan} for details.
\par Let $A$ be an $f$-algebra. A Riesz space $L$ with $^{\circ}(L^{\sim})=\{0\}$ is said to be
a (left) \textit{$f$-module} over $A$ (cf. \cite{LP, B. Turan}) if
$L$ is a left module over $A$, and satisfies the following two
conditions:

(i) for each $a\in A^{+}$ and $x\in L^{+}$ we have $ax\in L^{+}$,

(ii) if $x\perp y$, then for each $a\in A$ we have $a\cdot x \perp y
$.

When $A$ is an $f$-algebra with unit $e$, saying $L$ is a unital
$f$-module over $A$  implies that the left multiplication satisfies
$e\cdot x=x$ for all $x\in L$. From Corollary 2.3 in \cite {B.
Turan} we know that if $L$ is an
 $f$-module over $A$, then $L^{\sim}$ is an $f$-module over $A$ (and
 ($A^{\sim})^{\sim}_{n}$). The $f$-module $L$ over $A$ with unit $e$ is said to be \textit{topologically full} with respect to $A$ if for  two arbitrary vectors $x$, $y$ satisfying $0\leq y\leq x $ in
$L$, there exists a net $0\leq a_{\alpha}\leq e$ in
 $A$ such that $a_{\alpha}\cdot x\rightarrow y$ in
 $\sigma(L,\,L^{\sim})$. If $L$ is topologically full with respect
 to $A$, then $L^{\sim}$ is topologically full with respect
 to $(A^{\sim})^{\sim}_{n}$ \cite[Proposition 3.12]{B. Turan}.
 \par Let $A$ be a unital $f$-algebra and $L$, $M$ be $f$-modules over $A$. $T\in L_{b}(L,\,M)$ is called an
 \textit{$f$-linear operator} if $T(a\cdot x)=a\cdot Tx$ for each $a\in A$
 and $x\in L$. The collection of all $f$-linear operators will be denoted by $L_{b}(L, M; A)$. For each $x\in L$ and $f\in L^{\sim}$, we can define $\psi_{x,\,f}\in
 A^{\sim}$  by $\psi_{x,\,f}(a)=f(a\cdot x)$ for all $a\in
 A$. Let $S(x):=\{\psi_{x,\,f}:\,f\in L^{\sim}\}$. Then $S(x)$ is an
 order ideal in $A^{\sim}$ \cite {B. Turan}. $T\in L_{b}(L,\,M)$ is said to be an
 \textit{$f$-orthomorphism} if $S(Tx)\subseteq S(x)$ for each $x\in
 L$. The collection of all $f$-orthomorphisms will be denoted by
 Orth$(L, M; A)$. Turan \cite {B. Turan} showed that  $\textrm{Orth}(L, M; A)=L_{b}(L, M; A)$ whenever $M$ is
 topologically full with respect to $A$.
\par Clearly, $A^{\sim}$ is an $f$-module over the $f$-algebras $A$ and $(A^{\sim})^{\sim}_{n}$, respectively. If $A$ is unital , then
$A$ is topologically full with respect to itself (\cite[Proposition
2.6]{B. Turan}). From the above remarks we know that $A^{\sim}$ is
topologically full with respect to $(A^{\sim})^{\sim}_{n}$, and
hence the $f$-orthomorphisms and $f$-linear operators
 are precisely the same class of operators, that is,
$$ \textrm{Orth}(A^{\sim}, A^{\sim};(A^{\sim})^{\sim}_{n}
)=L_{b}(A^{\sim}, A^{\sim};(A^{\sim})^{\sim}_{n}).\eqno{(\ast)}$$
\par An $f$-algebra $A$ is said to be \textit{square-root
closed} whenever for any $a\in A$ there exists $b\in A$ such that
$|\,a|=b^{\,2}$. An immediate example is that a uniformly complete
$f$-algebra with unit element is square-root closed \cite{Huijsmans
de Pagter}. However, a square-root closed $f$-algebra is not
necessarily unital. For instance, $c_{\,0}$, with the familiar
coordinatewise operations and ordering, is a square-root closed
$f$-algebra without unit.  We recall  that an Archimedean
$f$-algebra $A$ is said to have the \textit{factorization property}
if, given $a\in A$, there exist $b,c\in A$ such that $a=bc$. It
should be noted that if $A$ is unital or square-root closed , then
$A$ has the factorization property.
\par In this paper, we do not have to assume that the $f$-algebras are
unital. We modify the definition of the $f$-orthomorphism introduced
by Turan \cite[Definition 3.7]{B. Turan} and consider the
$f$-orthomorphisms and $f$-linear operators on the order dual of an
$f$-algebra. In particular, when the $f$-algebra with separating
order dual has the factorization property, we prove that the
orthomorphisms, $f$-orthomorphisms and $f$-linear operators on the
order dual are precisely the same class of operators, that is, the
above equality $(\ast)$ still holds.
\par Our notions are standard. For the theory of Riesz spaces, positive operators and
$f$-algebras, we refer  the reader to the monographs \cite{AB, M,
Zaanen}.

\section{$f$-orthomorphisms on the order dual}
\par Let $A$ be an $f$-algebra with separating order dual (and hence $A$ Archimedean!) and $f\in A^{\sim}$. We  consider the mapping
$T_{f}:(A^{\sim})^{\sim}_{n}\rightarrow A^{\sim}$ defined by
$T_{f}(F)=F\cdot f$ for all $F\in (A^{\sim})^{\sim}_{n}$. It should
be noted that the mapping $V:(A^{\sim})^{\sim}_{n}\rightarrow
\textrm{Orth}(A^{\sim})$ defined by $V(F)=V_{F}$ for all $F\in
(A^{\sim})^{\sim}_{n}$, where $V_{F}(f)=F\cdot f$ for every $f\in
A^{\sim}$, is an algebra and Riesz isomorphism ( cf. \cite
[Proposition 2.2]{B. Turan}).
\begin{theorem}\label{Theorem 2.1}
 For $0\leq f\in A^{\sim}$, $T_{f}$ is an interval preserving lattice homomorphism.
\end{theorem}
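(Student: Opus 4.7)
My plan is to factor $T_f$ through the algebra and Riesz isomorphism $V:(A^{\sim})^{\sim}_{n}\to\mathrm{Orth}(A^{\sim})$ that was recorded at the start of the section; namely, $T_f=\mathrm{ev}_f\circ V$, where $\mathrm{ev}_f:\pi\mapsto\pi(f)$. Since $V$ is itself a Riesz isomorphism (in particular an interval-preserving lattice homomorphism), the statement reduces to proving that, for $f\ge 0$, the evaluation map $\mathrm{ev}_f:\mathrm{Orth}(A^{\sim})\to A^{\sim}$ is an interval-preserving lattice homomorphism.

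The lattice-homomorphism half is straightforward once one invokes the standard identity $|\pi|(f)=|\pi(f)|$, valid for any orthomorphism $\pi$ on an Archimedean Riesz space and any $f\ge 0$. Applying it to the Riesz-space identity $\pi\vee\rho=\tfrac12(\pi+\rho+|\pi-\rho|)$ evaluated at $f$ yields
$$(\pi\vee\rho)(f)=\tfrac12\bigl(\pi(f)+\rho(f)+|\pi(f)-\rho(f)|\bigr)=\pi(f)\vee\rho(f),$$
so $\mathrm{ev}_f$ preserves finite suprema and, being linear, is a Riesz homomorphism; composing with $V$ transfers the conclusion to $T_f$.

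The main obstacle is interval preservation. Given $F\ge 0$ in $(A^{\sim})^{\sim}_{n}$ and $0\le h\le T_f(F)=V_F(f)$, I must exhibit $G\in[0,F]$ with $G\cdot f=h$, equivalently, a positive orthomorphism $\pi\le V_F$ of $A^{\sim}$ satisfying $\pi(f)=h$. Two facts drive the construction: $A^{\sim}$ is Dedekind complete (it is an order dual), and orthomorphisms are band preserving, so $V_F(f)$ and hence $h$ lie in the principal band $B_f$, in which $f$ is a weak order unit. I would pass to a Maeda--Ogasawara representation of $B_f$ as an order-dense ideal of $C^{\infty}(X)$, with $f\leftrightarrow 1$ on the Stonean space $X$; then $V_F|_{B_f}$ is multiplication by the function $V_F(f)$, and the multiplication operator $M_h$ is a positive orthomorphism of $B_f$ dominated by $V_F|_{B_f}$ that sends $f$ to $h$. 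Extending $M_h$ by zero on the disjoint complement $B_f^{\,d}$ produces $\pi\in[0,V_F]$ in $\mathrm{Orth}(A^{\sim})$ with $\pi(f)=h$, and $G:=V^{-1}(\pi)$ witnesses interval preservation.

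The only genuine content is this last step: the naive attempt to define $G$ directly as a pointwise quotient $h/f$ makes no intrinsic sense in $A^{\sim}$, and the Dedekind completeness of $A^{\sim}$, exploited through the Stonean representation of $B_f$, is precisely what allows one to realise every $h\in[0,V_F(f)]$ as the image of $f$ under a dominated orthomorphism. Once $\pi$ has been produced, transferring back through $V^{-1}$ is a formality.
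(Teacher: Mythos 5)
Your proof is correct, and the interval-preservation half takes a genuinely different route from the paper's. For the lattice-homomorphism half the two arguments essentially coincide: the paper's chain $T_f(F\vee G)=(V(F)\vee V(G))(f)=V(F)(f)\vee V(G)(f)$ is precisely your observation that evaluation at a positive $f$ is a Riesz homomorphism on $\mathrm{Orth}(A^{\sim})$, which you justify through the pointwise formula $|\pi|(f)=|\pi(f)|$. For interval preservation, however, the paper argues by duality: it checks that $T_f$ on $(A^{\sim})^{\sim}_{n}$ agrees with the order adjoint of the restriction $T_f|_A\colon x\mapsto f\cdot x$ and invokes \cite[Theorem 7.8]{AB}, so that interval preservation comes for free from the fact that adjoints of lattice homomorphisms are interval preserving; note that this silently presupposes that $x\mapsto f\cdot x$ is a lattice homomorphism of $A$ into $A^{\sim}$ for $0\le f$, a fact from the Huijsmans--de Pagter bidual theory. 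You instead produce the required preimage by hand: transferring $[0,F]$ to $[0,V_F]$ through the isomorphism $V$, localizing to the band $B_f$ (where $h$ must live because orthomorphisms are band preserving), and using the Maeda--Ogasawara representation with $f\leftrightarrow 1$ to realize every $0\le h\le V_F(f)$ as $\pi(f)$ for a multiplication operator $0\le\pi\le V_F$, extended by zero across $A^{\sim}=B_f\oplus B_f^{\,d}$ (legitimate since $A^{\sim}$ is Dedekind complete). Your route pays for this explicitness by importing the Stonean representation and the identification of orthomorphisms of order-dense ideals of $C^{\infty}(X)$ with multiplication operators, whereas the paper's duality argument is shorter but rests on the unstated lattice-homomorphism property of $a\mapsto f\cdot a$ and on the adjoint theorem; both are valid, and yours has the merit of exhibiting the witness $G=V^{-1}(\pi)$ concretely.
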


\begin{proof}
 Clearly, $T_{f}$ is linear and positive. Since the mapping $V$ is a lattice homomorphism and $V_{F}
,\,V_{G}\in \textrm{Orth}(A^{\sim})$ for $F,\,G\in
(A^{\sim})^{\sim}_{n}$, we have
\begin{eqnarray*}
T_{f}(F\vee G)=(F\vee G)\cdot f&=&V_{F\vee G}(f)\\&=&(V(F\vee
G))(f)\\&=&(V(F)\vee V(G))(f)\\
&=&(V(F)(f))\vee (V(G)(f))\\&=&F\cdot f\vee G\cdot
f\\&=&T_{f}(F)\vee T_{f}(G).
\end{eqnarray*}
Hence $T_{f}$ is a lattice homomorphism.
 \par Next we show that $T_{f}$ is
an interval preserving operator. We identify $x$ with its canonical
image $x''$ in $(A^{\sim})^{\sim}_{n}$ and denote the  restriction
of $T_{f}$ to $A$ by $T_f|_{A}$. Then
$$T_{f}|_{A}(x)=T_{f}(x'')=x''\cdot f=f\cdot x.$$Thus for each $F\in (A^{\sim})^{\sim}_{n}$ and $x\in
A$, we see that
$$((T_{f}|_{A})'(F))(x)=F((T_{f}|_{A})(x))=F(f\cdot x)=(F\cdot f)(x)=(T_{f}(F))(x)$$
 which implies that
$(T_{f}|_{A})'$ is the same as $T_{f}$ on $(A^{\sim})^{\sim}_{n}$.
Since $(T_{f}|_{A})'$ is interval preserving (\,cf. \cite [Theorem
7.8]{AB}), $T_{f}$ is likewise an interval preserving operator.
\end{proof}

\begin{corollary}\label{Corollary 2.2}
 For $f\in A^{\sim}, F\in (A^{\sim})^{\sim}_{n}$, we have $|F\cdot f|=|F|\cdot |f|$. Furthermore, if $f\perp g$ in
$A^{\sim}$, $F\cdot f\perp G\cdot g$ holds for any $F, G\in
(A^{\sim})^{\sim}_{n}$.
\end{corollary}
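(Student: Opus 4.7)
The plan is to establish the identity $|F\cdot f|=|F|\cdot |f|$ first, and then deduce the disjointness assertion from it together with the $f$-module structure on $A^{\sim}$.

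For the identity I would use the Riesz isomorphism $V:(A^{\sim})^{\sim}_{n}\to \textrm{Orth}(A^{\sim})$ recalled just before Theorem \ref{Theorem 2.1}. Since $F\cdot f=V_{F}(f)$ by definition and $V_{|F|}=|V_{F}|$ because $V$ preserves the Riesz structure, combining these with the classical orthomorphism identity $|\pi(x)|=|\pi|(|x|)$ (obtained by splitting $\pi=\pi^{+}-\pi^{-}$ and noting that positive orthomorphisms are lattice homomorphisms whose values on a given positive vector are disjoint) yields
\[|F\cdot f|=|V_{F}(f)|=|V_{F}|(|f|)=V_{|F|}(|f|)=|F|\cdot |f|.\]
An alternative route that stays entirely within the paper is to apply Theorem \ref{Theorem 2.1} directly: for $f\geq 0$ the lattice homomorphism $T_{f}$ satisfies $|F\cdot f|=T_{f}(|F|)=|F|\cdot f=|F|\cdot |f|$, and the case of general $f$ follows after writing $f=f^{+}-f^{-}$ and invoking $F\cdot f^{+}\perp F\cdot f^{-}$, which is a special case of the second assertion.

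For the disjointness claim I would exploit that $A^{\sim}$ is an $f$-module over $(A^{\sim})^{\sim}_{n}$, as recalled in the introduction (Corollary 2.3 of Turan). The $f$-module axiom (ii) states that whenever $x\perp y$ in $A^{\sim}$ and $a\in (A^{\sim})^{\sim}_{n}$, one has $a\cdot x\perp y$. Applying this twice closes the argument: from $f\perp g$, acting on $g$ by $G$ yields $G\cdot g\perp f$; then from the resulting $f\perp G\cdot g$, acting on $f$ by $F$ yields $F\cdot f\perp G\cdot g$. I do not foresee any substantial obstacle, as both halves reduce to one-line consequences of facts already recorded; the only slightly non-routine ingredient is the orthomorphism identity $|\pi(x)|=|\pi|(|x|)$, which is standard (see e.g.\ \cite{AB}) and which in any case can be bypassed by the Theorem \ref{Theorem 2.1}-based alternative above.
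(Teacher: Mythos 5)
Your argument is correct, but it takes a genuinely different route from the paper's in both halves. For the identity $|F\cdot f|=|F|\cdot|f|$ the paper splits $f=f^{+}-f^{-}$, notes $F\cdot f^{+}\perp F\cdot f^{-}$ because $V_{F}$ is an orthomorphism, and then uses the lattice homomorphism property of $T_{f^{+}}$ and $T_{f^{-}}$ from Theorem \ref{Theorem 2.1} to pull the modulus onto $F$; your primary route instead works entirely in the $F$-variable, combining $V_{|F|}=|V_{F}|$ with the classical orthomorphism identity $|\pi(x)|=|\pi|(|x|)$ from \cite{AB}. That is a legitimate shortcut, and your ``alternative route'' is essentially the paper's computation --- though if you follow it, justify $F\cdot f^{+}\perp F\cdot f^{-}$ directly from $V_{F}$ being disjointness preserving rather than as ``a special case of the second assertion,'' to avoid any appearance of circularity (in your overall scheme it is not actually circular, since you prove the second assertion independently of the first). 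For the disjointness claim the paper derives everything from the modulus identity via the estimate $|F|\cdot|f|\wedge|G|\cdot|g|\leq\bigl((|F|+|G|)\cdot|f|\bigr)\wedge\bigl((|F|+|G|)\cdot|g|\bigr)=0$, the last step because $V_{|F|+|G|}$ is an orthomorphism; you instead apply axiom (ii) of the $f$-module structure of $A^{\sim}$ over $(A^{\sim})^{\sim}_{n}$ twice. Your version is shorter but delegates the substance to Turan's Corollary~2.3 (\cite{B. Turan}), which is where that module structure --- and in particular the disjointness-preservation axiom --- is established; the paper's version keeps the corollary self-contained, resting only on Theorem \ref{Theorem 2.1} and the fact that each $V_{F}$ is an orthomorphism. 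Both are valid.
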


\begin{proof}
Since $V_{F}$ is an orthomorphism on $A^{\sim}$, we have
$V_{F}(f^{+})\perp V_{F}(f^{-})$ for each $f\in A^{\sim}$, that is,
$F\cdot(f^{+})\perp F\cdot(f^{-})$.   From Theorem \ref{Theorem 2.1}
we know
 \begin{eqnarray*}
|F\cdot f|&=&|F\cdot f^{+}|+|F\cdot
f^{-}|\\&=&|T_{f^{+}}(F)|+|T_{f^{-}}(F)|\\&=&T_{f^{+}}(|F|)+T_{f^{-}}(|F|)\\&=&|F|\cdot
f^{+}+|F|\cdot f^{-}=|F|\cdot |\,f|.
\end{eqnarray*}
Let $f\perp g$ in $A^{\sim}$. Then we have
\begin{eqnarray*}
|F\cdot f|\wedge |G\cdot g|&=&|F|\cdot |f|\wedge |G|\cdot
|\,g|\\&\leq&
 ((|F|+|G|)\cdot |f|)\wedge( (|F|+|G|)\cdot |\,g|)=0,
 \end{eqnarray*}
which implies that $F\cdot f\perp G\cdot g$ for all $F, G\in
(A^{\sim})^{\sim}_{\,n}$.
\end{proof}

\par Following the above discussion, we now consider $R(f)=\{F\cdot f: F\in(A^{\sim})^{\sim}_{\,n}\}$, the image
of $(A^{\sim})^{\sim}_{\,n}$ under $T_{f}$.
\begin{corollary}\label{Corollary 2.3}
 If $A$ is an $f$-algebra and $f\in (A^{\sim})$, then $R(f)=R(|\,f|)$, and $R(f)$ is an
order ideal in $A^{\sim}$.
\end{corollary}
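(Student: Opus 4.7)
The plan is to exploit the Riesz and algebra isomorphism $V:(A^{\sim})^{\sim}_{n}\to \textrm{Orth}(A^{\sim})$, under which $R(f)$ becomes the orbit $\{\pi(f):\pi\in \textrm{Orth}(A^{\sim})\}$. The statement $R(f)=R(|f|)$ then says that this orbit depends only on $|f|$, and the second assertion will follow once $R(|f|)$ is identified as the image of the interval-preserving operator $T_{|f|}$.

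For the first equality, I would produce an orthomorphism on $A^{\sim}$ that swaps $f$ and $|f|$. As the order dual of a Riesz space, $A^{\sim}$ is Dedekind complete, so the band generated by $f^{+}$ is a projection band; let $P$ denote the corresponding band projection. Since $f^{+}\perp f^{-}$ we have $Pf^{+}=f^{+}$ and $Pf^{-}=0$, and the orthomorphism $\pi:=2P-I\in \textrm{Orth}(A^{\sim})$ satisfies $\pi(f)=f^{+}+f^{-}=|f|$ and $\pi(|f|)=f^{+}-f^{-}=f$. Setting $E:=V^{-1}(\pi)\in (A^{\sim})^{\sim}_{n}$ (so that $E\cdot f=|f|$ and $E\cdot |f|=f$), the multiplicativity of $V$ yields, for every $F\in (A^{\sim})^{\sim}_{n}$,
\[
F\cdot f=V_{F}(V_{E}(|f|))=V_{F\cdot E}(|f|)=(F\cdot E)\cdot |f|\in R(|f|),
\]
and symmetrically $F\cdot |f|\in R(f)$, giving $R(f)=R(|f|)$.

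For the second assertion, I would use that $T_{|f|}$ is interval preserving by Theorem~\ref{Theorem 2.1}. Given $g=F\cdot |f|\in R(|f|)=R(f)$, Corollary~\ref{Corollary 2.2} gives $|g|=|F|\cdot |f|=T_{|f|}(|F|)$. For any $h\in A^{\sim}$ with $|h|\le |g|$, applying interval preservation to $0\le h^{+}\le |g|$ and $0\le h^{-}\le |g|$ produces $0\le G_{1},G_{2}\le |F|$ in $(A^{\sim})^{\sim}_{n}$ with $T_{|f|}(G_{1})=h^{+}$ and $T_{|f|}(G_{2})=h^{-}$, so
\[
h=T_{|f|}(G_{1}-G_{2})=(G_{1}-G_{2})\cdot |f|\in R(|f|)=R(f).
\]
Combined with the linearity of $T_{f}$, this shows $R(f)$ is an order ideal in $A^{\sim}$.

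The main obstacle is the construction of the swapping orthomorphism $2P-I$: it requires Dedekind completeness of $A^{\sim}$ to secure the projection band of $f^{+}$, and it crucially uses the \emph{multiplicative} half of the isomorphism $V$ to transport the composition $V_{F}\circ V_{E}$ back into the Arens product $F\cdot E$ on $(A^{\sim})^{\sim}_{n}$. Once this swap is in place, the rest is a routine consequence of Theorem~\ref{Theorem 2.1} and Corollary~\ref{Corollary 2.2}.
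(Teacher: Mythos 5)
Your second half---that $R(|f|)$ is an order ideal because $T_{|f|}$ is an interval preserving lattice homomorphism---is correct and is exactly the paper's argument. The gap is in the first half, at the step $E:=V^{-1}(\pi)$: this presupposes that $V:(A^{\sim})^{\sim}_{n}\to \textrm{Orth}(A^{\sim})$ is \emph{surjective}, and in the non-unital setting that this paper is specifically written to cover, it need not be. Take $A=c_{00}$ (finitely supported sequences with coordinatewise multiplication), a square-root closed $f$-algebra with separating order dual and no unit. Then $A^{\sim}=\mathbb{R}^{\mathbb{N}}$, $(A^{\sim})^{\sim}_{n}=c_{00}$, and $V$ identifies $F\in c_{00}$ with multiplication by the finitely supported sequence $F$, whereas $\textrm{Orth}(\mathbb{R}^{\mathbb{N}})$ consists of multiplications by \emph{arbitrary} sequences and in particular contains the identity. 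Since $(A^{\sim})^{\sim}_{n}$ here has no unit, $V$ cannot be onto, and your $\pi=2P-I$ (which acts as $\pm1$ in every coordinate) is not of the form $V_{E}$ for any $E\in(A^{\sim})^{\sim}_{n}$. So the element $E$ on which your whole chain $F\cdot f=(F\cdot E)\cdot|f|$ rests need not exist; the phrase ``algebra and Riesz isomorphism'' in the preliminaries has to be read as an isomorphism onto its range.

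The repair---and this is what the paper's proof does---is to transport $\pi$ by its \emph{order adjoint} rather than through $V^{-1}$. An orthomorphism $\pi$ on $A^{\sim}$ is order continuous, so $\pi'$ maps $(A^{\sim})^{\sim}_{n}$ into itself, and using the identity $\pi(f\cdot a)=\pi(f)\cdot a$ (a consequence of Theorem~\ref{theorem 3.1} applied to the canonical image $a''$) one computes, for all $a\in A$,
\[
(F\cdot|f|)(a)=(F\cdot\pi(f))(a)=F(\pi(f)\cdot a)=F(\pi(f\cdot a))=(\pi'(F)\cdot f)(a),
\]
whence $F\cdot|f|=\pi'(F)\cdot f\in R(f)$ and $R(|f|)\subseteq R(f)$. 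The reverse inclusion $R(f)\subseteq R(|f|)$ follows from $|F\cdot f|=|F|\cdot|f|$ (Corollary~\ref{Corollary 2.2}) together with the fact that $R(|f|)$ is an ideal. Your choice of swapping orthomorphism ($2P-I$ instead of the paper's $P_{1}-P_{2}$) is perfectly fine; only the mechanism by which it acts on $(A^{\sim})^{\sim}_{n}$ has to change.
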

\begin{proof}
First, since $T_{\,|f|}$ is an interval preserving lattice
homomorphism, we can easily see that $R(|\,f|)$ is an order ideal in
$A^{\sim}$. By Corollary \ref{Corollary 2.2} we conclude that
$R(f)\subseteq R(|\,f|)$.

 Now, to complete the proof we only need to prove that $R(|\,f|)\subseteq R(f)$. To this end, let $P_{\,1}: A^{\sim}\rightarrow
B_{f^{\,+}}$ , $P_{\,2}: A^{\sim}\rightarrow B_{f^{-}}$ be band
projections, where $B_{f^{\,+}}$ and $B_{f^{-}}$ are the bands
generated by $f^{\,+}$ and $f^{-}$ in $A^{\sim}$, respectively. If
$\pi=P_{\,1}-P_{\,2}$, we have $$\pi\in
\textrm{Orth}(A^{\sim}),\quad \pi(f)=|\,f|, \quad \pi(|\,f|)=f.$$In
addition, $\pi(f)\cdot a=\pi(f\cdot a)$ for all $a\in A$ (\,cf.
Theorem \ref{theorem 3.1}). Since $\pi$ is an orthomorphism on
$A^{\sim}$ and hence order continuous (cf. \cite [Theorem
8.10]{AB}), we have $\pi^{\prime}((A^{\sim})^{\sim}_{\,n})\subseteq
(A^{\sim})^{\sim}_{\,n}$. For all $a\in A$ and all
$F\in(A^{\sim})^{\sim}_{\,n}$, from
\begin{eqnarray*}(F\cdot
|\,f|)(a)&=&(F\cdot\pi(f))(a)\\&=&F(\pi(f)\cdot a)\\&=&F(\pi(f\cdot
a))\\&=&(\pi^{\prime}(F)\cdot f)(a)
\end{eqnarray*}it follows that $F\cdot|\,f|=\pi^{\prime}(F)\cdot f$
for all $F\in(A^{\sim})^{\sim}_{\,n}$, which implies that
$R(|\,f|)\subseteq R(f)$, as desired.
\end{proof}

\par Next we give a necessary and sufficient condition for $R(f)\perp R(g)$ when
$A$ has the factorization property. First we need the following
lemma.

\begin{lemma}
 Let $A$ be an $f$-algebra with the factorization property, and $f\in A^{\sim}$. If $f\cdot x=0$ for each $x\in A$,
then $f=0$.
\end{lemma}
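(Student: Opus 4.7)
The plan is to unwind the definition of the Arens-type action of $A$ on $A^{\sim}$ and then apply the factorization property directly; no deeper machinery from the preceding results is needed.

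First I would recall that, by Step (1) of the construction in the introduction, $f\cdot x$ is the functional on $A$ given by $(f\cdot x)(b)=f(xb)$ for all $b\in A$. Hence the hypothesis $f\cdot x=0$ for every $x\in A$ translates to the scalar identity $f(xb)=0$ for every pair $x,b\in A$.

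Next I would invoke the factorization property: given an arbitrary $a\in A$, there exist $x,b\in A$ with $a=xb$. Applying the previous identity to this particular factorization yields $f(a)=f(xb)=0$. Since $a\in A$ was arbitrary, this gives $f=0$.

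There is essentially no obstacle here; the lemma is a direct bookkeeping consequence of the definition of the module action together with the factorization hypothesis. It is worth emphasizing in the write-up that the assumption $^{\circ}(A^{\sim})=\{0\}$ (separating order dual) is not needed for this lemma, only the factorization property $A=A\cdot A$. This is precisely what makes the factorization property the right replacement for the existence of a multiplicative unit in the subsequent development.
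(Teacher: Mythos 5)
Your argument is correct and is essentially identical to the paper's proof: both unwind $(f\cdot x)(b)=f(xb)$ and apply the factorization $a=xb$ to conclude $f(a)=0$ for all $a$. The only cosmetic difference is that the paper factors positive elements $a\in A^{+}$ and then concludes $f=0$ by linearity, whereas you factor an arbitrary $a$ directly; both are covered by the stated factorization property.
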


\begin{proof}
Since $A$ has the factorization property, for each $a\in A^{+}$
there exist $x,y\in A$ such that $a=xy$. Hence from
$$f(a)=f(xy)=(f\cdot x)(y)=0$$  it follows easily that $f=0$ holds.
\end{proof}

\begin{theorem}\label{Theorem 2.5}
Let $A$ be an $f$-algebra with the factorization property. If
$f,g\in A^{\sim}$, then $f\perp g$ if and only if $R(f)\perp R(g)$.
\end{theorem}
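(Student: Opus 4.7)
The forward direction is essentially a restatement of Corollary \ref{Corollary 2.2}: if $f\perp g$ in $A^{\sim}$, then $F\cdot f\perp G\cdot g$ for all $F,G\in(A^{\sim})^{\sim}_{n}$, which is exactly the definition of $R(f)\perp R(g)$. So the real content of the theorem is the converse, and I expect the preceding Lemma, which is where the factorization property is used, to be the decisive ingredient.

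For the converse, assume $R(f)\perp R(g)$. By Corollary \ref{Corollary 2.3} we may replace $f$ and $g$ by $|f|$ and $|g|$ and work with positive functionals; set $h=|f|\wedge|g|$, so that the goal becomes $h=0$. The key idea is to restrict attention to elements of $R(|f|)$ and $R(|g|)$ of the form $x''\cdot|f|$ and $x''\cdot|g|$ with $x\in A^{+}$, where $x''\in(A^{\sim})^{\sim}_{n}$ denotes the canonical image. Using the identification $x''\cdot|f|=|f|\cdot x$ obtained in the proof of Theorem \ref{Theorem 2.1}, the hypothesis yields $(|f|\cdot x)\wedge(|g|\cdot x)=0$ for every $x\in A^{+}$. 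Since the action of $A$ on $A^{\sim}$ is positive and monotone (because $A^{\sim}$ is an $f$-module over $A$), the inequalities $0\leq h\leq|f|$ and $0\leq h\leq|g|$ give $0\leq h\cdot x\leq(|f|\cdot x)\wedge(|g|\cdot x)=0$, so $h\cdot x=0$ for all $x\in A^{+}$, and hence for all $x\in A$ by linearity.

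At this point the Lemma above applies directly to $h\in A^{\sim}$ and forces $h=0$, i.e.\ $f\perp g$. The main obstacle I anticipate is not computational but conceptual: one must resist the temptation to route the argument through the orthomorphism $V_{x''}$ and its Riesz-homomorphism property in order to distribute over $\wedge$. The cleaner path uses only monotonicity of the module action to sandwich $h\cdot x$ between $0$ and $(|f|\cdot x)\wedge(|g|\cdot x)$, after which the Lemma, the unique point at which the factorization property enters, closes the argument.
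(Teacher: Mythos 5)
Your proof is correct and follows essentially the same route as the paper's: the forward direction is read off from Corollary \ref{Corollary 2.2}, and the converse reduces to showing $(|f|\wedge|g|)\cdot x=0$ for all $x\in A$ and then invokes the factorization Lemma, exactly as in the paper. The only (harmless) local difference is that the paper derives the equality $F\cdot(|f|\wedge|g|)=(F\cdot|f|)\wedge(F\cdot|g|)$ for $0\le F$ from the lattice-homomorphism property of $V_{F}$ before specializing to $F=x''$, whereas you settle for the one-sided estimate $0\le(|f|\wedge|g|)\cdot x\le(|f|\cdot x)\wedge(|g|\cdot x)$ via positivity of $f\mapsto f\cdot x$ for $x\in A^{+}$, which suffices equally well.
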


\begin{proof}
 If $f\perp g$ in $A^{\sim}$, then it follows from Corollary \ref{Corollary 2.2} that $F\cdot f\perp G\cdot g$ for all
$F,G\in (A^{\sim})^{\sim}_{n}$ . This implies that $R(f)\perp R(g)$.

Conversely, if $R(f)$ and $R(g)$ are disjoint, then for each $F\in
((A^{\sim})^{\sim}_{\,n})^{+}$ we have
 \begin{eqnarray*}
 F\cdot(|f|\wedge|g|)&=&V_{F}(|f|\wedge|g|)\\&=&V_{F}(|f|)\wedge
V_{F}(|g|)\\&=&F\cdot |f|\wedge F\cdot|g|\\&=&|F\cdot f|\wedge
|F\cdot g|=0.
 \end{eqnarray*} In
particular, for any $x\in A$, its canonical image $x''\in
(A^{\sim})^{\sim}_{n}$ also satisfies $x''\cdot
(|f|\wedge|g|)=(|f|\wedge|g|)\cdot x=0$. By the preceding lemma we
have $|f|\wedge|g|=0$, i.e., $f\perp g$, as desired.
\end{proof}
Now we give the definition of the so-called $f$-orthomorphism.
\begin{definition}
 Let $A$ be an $f$-algebra and $T\in L_{b}(A^{\sim})$. $T$ is called an
$f$-orthomorphism on $A^{\sim}$ if $R(Tf)\subseteq R(f)$ for each
$f\in A^{\sim}$. The collection of all $f$-orthomorphisms on
$A^{\sim}$ will be denoted by $\textrm{Orth}(A^{\sim}, A^{\sim};
(A^{\sim})^{\sim}_{n})$.
\end{definition}
\par The next result deals with the relationship between the $f$-orthomorphisms  and the orthomorphisms on the order
dual of an $f$-algebra with the factorization property. Note that
$\textrm{Orth}(A^{\sim})$ is a band in $L_{b}(A^{\sim})$.
\begin{theorem}\label{theorem 2.7}
Let $A$ is an $f$-algebra. Then$\textrm{ Orth}(A^{\sim}, A^{\sim};
(A^{\sim})^{\sim}_{n})$ is a linear subspace of $ L_{b}(A^{\sim})$
and $\textrm{Orth}(A^{\sim})\subseteq  \textrm{Orth}(A^{\sim},
A^{\sim}; (A^{\sim})^{\sim}_{n})$.
 \par If $A$, in addition, has the factorization property, then $ Orth(A^{\sim}, A^{\sim}; (A^{\sim})^{\sim}_{n})= Orth(A^{\sim})$.
\end{theorem}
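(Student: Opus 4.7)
The plan is to split the theorem into three separate claims: the linearity of $\textrm{Orth}(A^{\sim}, A^{\sim}; (A^{\sim})^{\sim}_{n})$, the inclusion $\textrm{Orth}(A^{\sim}) \subseteq \textrm{Orth}(A^{\sim}, A^{\sim}; (A^{\sim})^{\sim}_{n})$, and the reverse inclusion under the factorization hypothesis. The first two do not require factorization and follow from Corollary~\ref{Corollary 2.3} (stating that $R(f)$ is an order ideal) together with the algebra and Riesz isomorphism $V : (A^{\sim})^{\sim}_{n} \to \textrm{Orth}(A^{\sim})$ recalled at the start of the section. For linearity, given $T, S \in \textrm{Orth}(A^{\sim}, A^{\sim}; (A^{\sim})^{\sim}_{n})$, a scalar $\lambda$, and $f \in A^{\sim}$, for any $H \in (A^{\sim})^{\sim}_{n}$ I would write
$$H \cdot ((T + \lambda S)f) = H \cdot Tf + \lambda (H \cdot Sf) \in R(Tf) + R(Sf) \subseteq R(f),$$
which yields $R((T + \lambda S)f) \subseteq R(f)$. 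For the first inclusion, every $\pi \in \textrm{Orth}(A^{\sim})$ is of the form $\pi = V_{F}$ for some $F \in (A^{\sim})^{\sim}_{n}$, and associativity of the Arens module action gives $H \cdot \pi(f) = H \cdot (F \cdot f) = (H \cdot F) \cdot f \in R(f)$, so $R(\pi f) \subseteq R(f)$.

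The substantive step is the reverse inclusion. Let $T \in \textrm{Orth}(A^{\sim}, A^{\sim}; (A^{\sim})^{\sim}_{n})$. Since $V$ is an algebra isomorphism onto $\textrm{Orth}(A^{\sim})$ and the latter has the identity operator as its multiplicative unit, there is a unique $E \in (A^{\sim})^{\sim}_{n}$ with $V_{E} = \mathrm{Id}$; equivalently, $E \cdot h = h$ for every $h \in A^{\sim}$. This unit upgrades the defining containment $R(Tf) \subseteq R(f)$ into the pointwise statement $Tf = E \cdot Tf \in R(Tf) \subseteq R(f)$. Then for $f \perp g$ in $A^{\sim}$, Theorem~\ref{Theorem 2.5} (where the factorization property is used) yields $R(f) \perp R(g)$; since also $g = E \cdot g \in R(g)$, we obtain $Tf \perp g$. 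Thus $T$ is band preserving and, being order bounded by hypothesis, belongs to $\textrm{Orth}(A^{\sim})$.

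The main obstacle is the careful extraction of the unit $E \in (A^{\sim})^{\sim}_{n}$ from the abstract isomorphism $V$ together with the verification that it acts trivially on $A^{\sim}$; this is what converts the containment $R(Tf) \subseteq R(f)$ into genuine band preservation. The factorization property enters through Theorem~\ref{Theorem 2.5}, although the specific implication needed ($f \perp g \Rightarrow R(f) \perp R(g)$) is already contained in Corollary~\ref{Corollary 2.2} and does not by itself require factorization.
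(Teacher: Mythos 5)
Your argument for the reverse inclusion hinges on extracting $E\in(A^{\sim})^{\sim}_{n}$ with $E\cdot h=h$ for every $h\in A^{\sim}$ by pulling the identity operator back through $V$. That step requires $V$ to be \emph{surjective} onto $\textrm{Orth}(A^{\sim})$, and this is precisely what is not available in the non-unital setting the paper is written for: $V$ is an algebra and Riesz isomorphism of $(A^{\sim})^{\sim}_{n}$ onto its range, but the identity operator need not lie in that range. Concretely, take $A=C_{c}(\mathbb{R})$ with pointwise operations; it is square-root closed, hence has the factorization property. If some $E\geq 0$ satisfied $E\cdot h=h$ for all $h$, then with $\varphi=\int\cdot\,d\lambda\in (A^{\sim})^{+}$ ($\lambda$ Lebesgue measure) and bump functions $0\leq f_{n}\leq 1$ equal to $1$ on $[-n,n]$ we would have $0\leq\varphi\cdot f_{n}\leq\varphi$, hence $2n\leq\varphi(f_{n})=E(\varphi\cdot f_{n})\leq E(\varphi)<\infty$ for all $n$, a contradiction. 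So no such $E$ exists, and $Tf\in R(Tf)$ cannot be asserted. The same surjectivity assumption also underlies your proof of the first inclusion (``every $\pi$ is of the form $V_{F}$''); that part is fixable, since the paper instead verifies $F\cdot\pi(f)=\pi'(F)\cdot f$ directly, using only that orthomorphisms are $f$-linear (Theorem \ref{theorem 3.1}) and that $(A^{\sim})^{\sim}_{n}$ is commutative.

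There is also an internal warning sign in your own closing remark: you note that the only direction of Theorem \ref{Theorem 2.5} you invoke ($f\perp g\Rightarrow R(f)\perp R(g)$) follows from Corollary \ref{Corollary 2.2} without factorization, so your argument, if correct, would prove $\textrm{Orth}(A^{\sim},A^{\sim};(A^{\sim})^{\sim}_{n})\subseteq\textrm{Orth}(A^{\sim})$ for an arbitrary $f$-algebra. That is false: for $A=\mathbb{R}^{2}$ with the zero multiplication one has $R(f)=\{0\}$ for every $f$, so every operator in $L_{b}(A^{\sim})=M_{2}(\mathbb{R})$ is an $f$-orthomorphism, while $\textrm{Orth}(A^{\sim})$ consists only of the diagonal operators. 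The factorization property must therefore enter the reverse inclusion essentially, and in the paper it does so through the \emph{converse} direction of Theorem \ref{Theorem 2.5}: from $R(Tf)\subseteq R(f)\perp R(g)$ one gets $R(Tf)\perp R(g)$, and then the lemma preceding Theorem \ref{Theorem 2.5} (if $h\cdot x=0$ for all $x\in A$ then $h=0$, which is where factorization is used) yields $Tf\perp g$ without ever needing $Tf\in R(Tf)$. Your linearity argument for the first assertion is fine.
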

\begin{proof}
 First we can easily see that
$\textrm{Orth}(A^{\sim},A^{\sim};(A^{\sim})^{\sim}_{n})$ is a linear
subspace of $ L_{b}(A^{\sim})$. To prove
$\textrm{Orth}(A^{\sim})\subseteq\textrm{Orth}(A^{\sim}, A^{\sim};
(A^{\sim})^{\sim}_{n})$ let $\pi\in\textrm{Orth}(A^{\sim}). $ We
claim that $F\cdot\pi(f)=\pi\,'(F)\cdot f$ for all $F\in
(A^{\sim})^{\sim}_{n}$ and all $f\in A^{\sim}$. To this end, let
$F\in (A^{\sim})^{\sim}_{n}$, $f\in A^{\sim}$ and $x\in A$ be
arbitrary. Since
 $(A^{\sim})^{\sim}_{n}$ is a commutative $f$-algebra, by Theorem \ref{theorem 3.1} we have
\begin{eqnarray*}
(\pi\,'(F)\cdot f)(x)=\pi\,'(F)(f\cdot x)=F(\pi(f\cdot
x))&=&F(\pi(x''\cdot
f))\\&=&F(x''\cdot (\pi(f)))\\
&=&(F\cdot x'')(\pi(f))\\&=&(x''\cdot F)(\pi(f))\\&=&x''(F\cdot
\pi(f))=(F\cdot \pi(f))(x).
\end{eqnarray*}
 Thus,
$F\cdot\pi(f)=\pi\,'(F)\cdot f$. This implies that
$R(\pi(f))\subseteq R(f)$ for each $f\in A^{\sim}$, that is,
$\textrm{Orth}(A^{\sim})\subseteq\textrm{Orth}(A^{\sim}, A^{\sim};
(A^{\sim})^{\sim}_{n})$.
\par If $A$ has the factorization property, we prove that $ \textrm{Orth}(A^{\sim},
A^{\sim};(A^{\sim})^{\sim}_{n})\subseteq \textrm{Orth}(A^{\sim})$
holds. To this end,  take $T\in\textrm{ Orth}(A^{\sim},
A^{\sim};(A^{\sim})^{\sim}_{n})$ and $f,\,g\in A^{\sim}$ saisfying
$f\perp g$ in $A^{\sim}$. Then it follows from Theorem \ref{Theorem
2.5}
 that $R(f)\perp R(g)$. Since $T\in \textrm{Orth}(A^{\sim},A^{\sim};(A^{\sim})^{\sim}_{n})$, we
have $R(T(f))\subset R(f)$. Therefore $R(T(f))\perp R(g)$, which
implies that $T(f)\perp g$, and hence $T$ is an orthomorphism on
$A^{\sim}$, as desired.
\end{proof}

\section{$f$-linear operators on the order dual}
 Let $A$ be an $f$-algebra with separating order dual and $T\in L_{b}(A^{\sim})$. Recall that $T$ is called to be \textit{$f$-linear} with respect to $(A^{\sim})^{\sim}_{n}$
 if $T(G\cdot f)=G\cdot T(f)$ for all $f\in A^{\sim}$ and $G\in
(A^{\sim})^{\sim}_{n}$. The set of all $f$-linear operators on
$A^{\sim}$ will be denoted by
$L_{b}(A^{\sim},A^{\sim};(A^{\sim})^{\sim}_{n})$. It follows from
\cite[Lemma 4.4]{LP} that
$L_{b}(A^{\sim},A^{\sim};(A^{\sim})^{\sim}_{n})$ is a band in
$L_{b}(A^{\sim})$.

\begin{theorem}\label{theorem 3.1}
  Let $A$ be an $f$-algebra with separating order dual. Then $Orth(A^{\sim})\subseteq L_{b}(A^{\sim},A^{\sim};(A^{\sim})^{\sim}_{n})$.
\end{theorem}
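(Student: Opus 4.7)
The plan is to exploit the isomorphism $V:(A^{\sim})^{\sim}_{n}\rightarrow \textrm{Orth}(A^{\sim})$, together with the fact that the orthomorphism algebra on an Archimedean Riesz space is commutative.

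More precisely, fix $\pi \in \textrm{Orth}(A^{\sim})$, $G\in (A^{\sim})^{\sim}_{n}$ and $f\in A^{\sim}$; I must show that $\pi(G\cdot f)=G\cdot\pi(f)$. First I would note that by the isomorphism $V$ recalled in the introduction and at the beginning of Section~2, the operator $V_{G}:A^{\sim}\to A^{\sim}$ defined by $V_{G}(f)=G\cdot f$ belongs to $\textrm{Orth}(A^{\sim})$. Hence both $\pi$ and $V_{G}$ are orthomorphisms on $A^{\sim}$.

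Next I would invoke the standard fact that for any Archimedean Riesz space $E$, $\textrm{Orth}(E)$ is a commutative $f$-algebra under composition (see, e.g., \cite[Theorem~8.24]{AB}). Since $A$ has separating order dual, $A^{\sim}$ is Archimedean (indeed, the order dual of any Riesz space is Dedekind complete, hence Archimedean), so $\textrm{Orth}(A^{\sim})$ is commutative. Therefore $\pi\circ V_{G}=V_{G}\circ\pi$. Evaluating at $f$ gives
\[
\pi(G\cdot f)=\pi(V_{G}(f))=V_{G}(\pi(f))=G\cdot\pi(f),
\]
so $\pi \in L_{b}(A^{\sim},A^{\sim};(A^{\sim})^{\sim}_{n})$, as required.

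There is no serious obstacle in this argument; the whole point is that the multiplication by elements of $(A^{\sim})^{\sim}_{n}$ lands, via $V$, inside $\textrm{Orth}(A^{\sim})$, which is a commutative ring of operators. The only thing one must be slightly careful about is to verify that the ambient Riesz space $A^{\sim}$ is Archimedean so that the commutativity of $\textrm{Orth}(A^{\sim})$ may be applied — but this is automatic from the standing assumption that $A$ has separating order dual.
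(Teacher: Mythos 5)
Your argument is correct and is essentially identical to the paper's own proof: both rely on the fact that $V_{G}\in\textrm{Orth}(A^{\sim})$ and that $\textrm{Orth}(A^{\sim})$ is commutative (being an Archimedean $f$-algebra), so that $\pi(G\cdot f)=\pi(V_{G}(f))=V_{G}(\pi(f))=G\cdot\pi(f)$. Your write-up merely makes the commutativity step and the Archimedean hypothesis more explicit.
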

\begin{proof}
 Clearly $\textrm{Orth}(A^{\sim})$ is commutative since $\textrm{Orth}(A^{\sim})$ is an Archimedean $f$-algebra.
 To complete the proof, let $\pi\in Orth(A^{\sim})$.  We have $$\pi(G\cdot f)=\pi(V_{G}(f))=V_{G}(\pi(f))=G\cdot
(\pi(f))$$for all $f\in A^{\sim}$ and $G\in (A^{\sim})^{\sim}_{n}$.
Hence $\pi\in L_{b}(A^{\sim},A^{\sim};(A^{\sim})^{\sim}_{n})$ .
\end{proof}
\par The following result deals with the order adjoint of an $f$-linear operator on the order dual of an $f$-algebra.
It should be noted that the order adjoint of an order bounded
operator is order continuous (cf. \cite[Theorem 5.8]{AB}).
\begin{lemma}\label{lemma 3.2}
 Let $T\in L_{b}(A^{\sim},A^{\sim};(A^{\sim})^{\sim}_{n})$. Then
the order adjoint \,$T\,'$ of $T$ satisfies $T\,'(F)\cdot f=F\cdot
T(f)$ for all $F\in (A^{\sim})^{\sim}_{n}$ and $f\in A^{\sim}$. In
particular, $G\cdot T\,'(F)=T\,'(G\cdot F)$ for all $F,G \in
(A^{\sim})^{\sim}_{n}$.
\end{lemma}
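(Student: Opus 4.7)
The plan is to prove both identities by direct unwinding of the Arens multiplication formulas from the Introduction, the $f$-linearity of $T$, and the identification of $A$ with its canonical image in $(A^{\sim})^{\sim}_{n}$. The crucial preliminary observation, already exploited in the proof of Theorem~\ref{Theorem 2.1}, is that for $a\in A$ with canonical image $a''$, one has $a''\cdot f = f\cdot a$ as elements of $A^{\sim}$: for every $b\in A$, $(a''\cdot f)(b)=a''(f\cdot b)=(f\cdot b)(a)=f(ab)=(f\cdot a)(b)$.

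To verify the first identity $T'(F)\cdot f = F\cdot T(f)$ I would test both sides on an arbitrary $a\in A$; since an element of $A^{\sim}$ is determined by its values on $A$, this suffices. By the definition of the order adjoint and of Step~(2) of the Arens product, $(T'(F)\cdot f)(a) = T'(F)(f\cdot a) = F(T(f\cdot a))$. Rewriting $f\cdot a = a''\cdot f$ and applying the $f$-linearity hypothesis with $G:=a''\in(A^{\sim})^{\sim}_{n}$ yields $T(f\cdot a) = a''\cdot T(f) = T(f)\cdot a$; therefore $(T'(F)\cdot f)(a) = F(T(f)\cdot a) = (F\cdot T(f))(a)$, which is the first claim.

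For the ``in particular'' part I would evaluate both sides on an arbitrary $f\in A^{\sim}$ and combine the first identity with the defining formula of Step~(3): $(G\cdot T'(F))(f) = G(T'(F)\cdot f) = G(F\cdot T(f)) = (G\cdot F)(T(f)) = T'(G\cdot F)(f)$. The only point that requires care is the interpretation of $G\cdot T'(F)$, since a priori $T'(F)\in(A^{\sim})^{\sim}$ rather than $(A^{\sim})^{\sim}_{n}$; one may either extend the defining formula $(G\cdot H)(f):=G(H\cdot f)$ verbatim to $H\in(A^{\sim})^{\sim}$ (noting $H\cdot f\in A^{\sim}$ still makes sense), or first observe $T'(F)\in(A^{\sim})^{\sim}_{n}$ via the order continuity of the adjoint $T'$ recalled just before the lemma. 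Either reading makes the displayed chain an equality of functionals on $A^{\sim}$, completing the proof; the main obstacle is thus not the computation itself but this interpretive point about the domain of the Arens product.
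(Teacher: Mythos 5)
Your proof is correct and follows essentially the same route as the paper's: test both identities on elements of $A$ (resp.\ $A^{\sim}$), rewrite $f\cdot a$ as $a''\cdot f$ via the canonical embedding, and invoke the $f$-linearity of $T$ with $G=a''$. The only cosmetic difference is that you finish the first computation with $a''\cdot T(f)=T(f)\cdot a$ directly, where the paper instead appeals to commutativity of the Arens product ($F\cdot x''=x''\cdot F$); your remark that $T'$ maps $(A^{\sim})^{\sim}_{n}$ into itself by order continuity of the adjoint is exactly the point the paper records just before the lemma.
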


\begin{proof}
 Since $T\in L_{b}(A^{\sim},A^{\sim};(A^{\sim})^{\sim}_{n})$, and $(A^{\sim})^{\sim}_{n}$ is a
commutative $f$-algebra, we have
\begin{eqnarray*}
(T\,'(F)\cdot f)(x)=T\,'(F)(f\cdot x)&=&F(T(f\cdot
x))\\&=&F(T(x''\cdot
f))\\&=&F(x''\cdot (T(f)))\\
&=&(F\cdot x'')(T(f))\\&=&(x''\cdot F)(T(f))\\&=&x''(F\cdot
T(f))=(F\cdot T(f))(x)
\end{eqnarray*}
 for all $F\in
(A^{\sim})^{\sim}_{n}$, $f\in A^{\sim}$ and $x\in A$, which implies
that $T\,'(F)\cdot f=F\cdot T(f)$.
\par Let $F,\,G\in (A^{\sim})^{\sim}_{n}$ be given. Then for $f\in
A^{\sim}$, from
\begin{eqnarray*}
(G\cdot T\,'(F))(f)=G( T\,'(F)\cdot f)&=&G(F\cdot T(f))\\&=&(G\cdot
F)(T(f))\\&=&(T\,'(G\cdot F))(f)
\end{eqnarray*}
 it follows that $G\cdot T\,'(F)=T\,'(G\cdot F)$. This completes the
 proof.
\end{proof}

\begin{theorem}\label{theorem 3.3}

$L_{b}(A^{\sim},A^{\sim};(A^{\sim})^{\sim}_{n})\subseteq
Orth(A^{\sim},A^{\sim};(A^{\sim})^{\sim}_{n})$.
\end{theorem}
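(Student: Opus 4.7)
The plan is to apply the adjoint identity of Lemma~\ref{lemma 3.2} and convert an arbitrary element of $R(T(f))$ directly into an element of $R(f)$. Fix $T\in L_{b}(A^{\sim},A^{\sim};(A^{\sim})^{\sim}_{n})$ and $f\in A^{\sim}$. A typical element of $R(T(f))$ has the form $F\cdot T(f)$ for some $F\in (A^{\sim})^{\sim}_{n}$, so the task reduces to showing $F\cdot T(f)\in R(f)$ for every such $F$.

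Invoking Lemma~\ref{lemma 3.2} immediately yields
\[
F\cdot T(f)=T^{\prime}(F)\cdot f,
\]
where $T^{\prime}$ denotes the order adjoint of $T$. Granted that $T^{\prime}(F)$ itself lies in $(A^{\sim})^{\sim}_{n}$, the right--hand side is a member of $R(f)$ by the very definition of $R(f)$, and since $F$ was arbitrary we obtain $R(T(f))\subseteq R(f)$, i.e.\ $T\in \textrm{Orth}(A^{\sim},A^{\sim};(A^{\sim})^{\sim}_{n})$. So the whole argument hinges on establishing that $T^{\prime}$ sends $(A^{\sim})^{\sim}_{n}$ into itself.

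This stability claim is the technical heart of the proof and the step I expect to be the main obstacle. Although the order adjoint of an order bounded operator is automatically order continuous as a map $(A^{\sim})^{\sim}\to (A^{\sim})^{\sim}$ (cf.\ Theorem 5.8 of \cite{AB}), this alone does not keep the band $(A^{\sim})^{\sim}_{n}$ invariant; what is really needed is that the functional $g\mapsto T^{\prime}(F)(g)=F(T(g))$ on $A^{\sim}$ be order continuous whenever $F$ is. I would verify this directly: given $g_{\alpha}\downarrow 0$ in $A^{\sim}$, the $f$-linearity relation $T(G\cdot g)=G\cdot T(g)$ together with the canonical embedding $A\hookrightarrow (A^{\sim})^{\sim}_{n}$ should provide enough control over $T$ on the multiplicative building blocks of $A^{\sim}$ that, combined with the order continuity of $F$, forces $F(T(g_{\alpha}))\to 0$. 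Once this is secured, the displayed identity above closes the proof in a single line.
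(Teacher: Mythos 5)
Your reduction via Lemma~\ref{lemma 3.2} is the right starting point, and you have correctly located the crux: the identity $F\cdot T(f)=T^{\prime}(F)\cdot f$ only places $F\cdot T(f)$ in $R(f)$ if $T^{\prime}(F)$ lies in $(A^{\sim})^{\sim}_{n}$, since $R(f)$ is by definition the image of $(A^{\sim})^{\sim}_{n}$ (not of all of $(A^{\sim})^{\sim}$) under $T_{f}$. But your proposal never establishes this invariance; the final paragraph is a statement of intent rather than an argument, and the gap is not cosmetic. For a positive order bounded operator $S$ on $A^{\sim}$, the condition $S^{\prime}((A^{\sim})^{\sim}_{n})\subseteq(A^{\sim})^{\sim}_{n}$ asks that $g_{\alpha}\downarrow 0$ imply $F(Sg_{\alpha})\downarrow 0$ for every $0\leq F\in(A^{\sim})^{\sim}_{n}$; since $(A^{\sim})^{\sim}_{n}$ contains the canonical image of $A$ and hence separates the points of $A^{\sim}$, this forces $\inf_{\alpha}Sg_{\alpha}=0$, i.e.\ it is equivalent to the order continuity of $S$. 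So your plan requires showing, essentially from scratch, that every positive $f$-linear operator on $A^{\sim}$ is order continuous --- a statement about as strong as the conclusion $T\in\textrm{Orth}(A^{\sim})$ toward which the paper is ultimately heading (Theorem~\ref{theorem 3.4}), and one that is certainly not supplied by the order continuity of $T^{\prime}$ as a map on the bidual from \cite[Theorem 5.8]{AB}, as you yourself note.

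The paper's proof gets around this precisely by not insisting that $T^{\prime}(F)$ itself serve as the witness. Working with $|T|$ (again $f$-linear, because $L_{b}(A^{\sim},A^{\sim};(A^{\sim})^{\sim}_{n})$ is a band in $L_{b}(A^{\sim})$) and with $0\leq G\in(A^{\sim})^{\sim}_{n}$, one has
$0\leq G\cdot|T(f)|\leq G\cdot(|T|\,|f|)=(|T|^{\prime}(G))\cdot|f|=T_{|f|}(|T|^{\prime}(G))$,
and the interval preserving property of $T_{|f|}$ from Theorem~\ref{Theorem 2.1} is then invoked to produce some $F\in(A^{\sim})^{\sim}_{n}$ with $0\leq F\leq|T|^{\prime}(G)$ and $F\cdot|f|=G\cdot|T(f)|$; combined with $R(g)=R(|g|)$ (Corollary~\ref{Corollary 2.3}), this gives $R(T(f))\subseteq R(f)$. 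In short, Lemma~\ref{lemma 3.2} alone does not close the argument: you need either the interval preservation of Theorem~\ref{Theorem 2.1} (the paper's device for landing back inside $(A^{\sim})^{\sim}_{n}$) or an independent proof that $f$-linear operators on $A^{\sim}$ are order continuous, and the latter is exactly what is missing from your proposal.
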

\begin{proof}
 For $T\in L_{b}(A^{\sim},A^{\sim};(A^{\sim})^{\sim}_{n}) $, we know that $|\,T|$ is also $f$-linear with
 respect to $(A^{\sim})^{\sim}_{n}$. Assume that $0\leq G\in(A^{\sim})^{\sim}_{n}$ and $f\in A^{\sim}$. So by Lemma \ref{lemma 3.2},
we have$$0\leq G\cdot (|T(f)|)\leq G\cdot (|T||\,f|)=(|T|'(G))\cdot
|f|=T_{|\,f|}(|T|'(G))$$Since $T_{|\,f|}$ is interval preserving,
there exists $F\in (A^{\sim})^{\sim}_{n} $ such that $0\leq F\leq
|T|'(G)$
 and $G\cdot (|T(f)|)=F\cdot |f|$. It is now immediate that $R(|T(f)|)\subseteq
 R(|f|)$ and hence
 $L_{b}(A^{\sim},A^{\sim};(A^{\sim})^{\sim}_{n})\subseteq
\textrm{Orth}(A^{\sim},A^{\sim};(A^{\sim})^{\sim}_{n})$, as desired.
 \end{proof}

\par Combining Theorem \ref{theorem 3.1}, Theorem \ref{theorem 3.3} with Theorem \ref{theorem 2.7}, we have the
following result.

\begin{theorem}\label{theorem 3.4}
 If $A$ is an $f$-algebra with separating order dual, then
 $$Orth(A^{\sim})\subseteq
L_{b}(A^{\sim},A^{\sim};(A^{\sim})^{\sim}_{n})\subseteq
Orth(A^{\sim},A^{\sim};(A^{\sim})^{\sim}_{n}).$$In particular, if,
in addition, $A$ has the factorization property,
then$$Orth(A^{\sim})=
L_{b}(A^{\sim},A^{\sim};(A^{\sim})^{\sim}_{n})=
Orth(A^{\sim},A^{\sim};(A^{\sim})^{\sim}_{n}).$$
\end{theorem}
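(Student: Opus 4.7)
The statement is essentially a book-keeping theorem that assembles the three preceding results into a single chain. The plan is to verify the inclusion chain first, then derive the equality from the factorization hypothesis by squeezing.

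First, I would note that the first inclusion is precisely Theorem \ref{theorem 3.1}: every orthomorphism $\pi$ on $A^{\sim}$ commutes with the Arens module action of $(A^{\sim})^{\sim}_{n}$, via the identity $\pi(G\cdot f)=\pi(V_{G}(f))=V_{G}(\pi(f))=G\cdot\pi(f)$, which is exactly the defining property of $f$-linearity with respect to $(A^{\sim})^{\sim}_{n}$. The second inclusion is Theorem \ref{theorem 3.3}: for $T\in L_{b}(A^{\sim},A^{\sim};(A^{\sim})^{\sim}_{n})$ and any $f\in A^{\sim}$, the estimate $G\cdot|T(f)|\leq T_{|f|}(|T|'(G))$ combined with the interval-preserving property of $T_{|f|}$ (Theorem \ref{Theorem 2.1}) produces some $F\in (A^{\sim})^{\sim}_{n}$ with $G\cdot|T(f)|=F\cdot|f|$, so that $R(T(f))\subseteq R(f)$ and hence $T$ lies in $\mathrm{Orth}(A^{\sim},A^{\sim};(A^{\sim})^{\sim}_{n})$. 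Together these two inclusions give the stated chain under the sole assumption that $A^{\sim}$ separates points of $A$.

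For the second assertion, I would invoke Theorem \ref{theorem 2.7}: under the factorization property, $\mathrm{Orth}(A^{\sim},A^{\sim};(A^{\sim})^{\sim}_{n})=\mathrm{Orth}(A^{\sim})$. Substituting this equality into the right-hand end of the inclusion chain collapses both endpoints to $\mathrm{Orth}(A^{\sim})$, and a standard squeezing argument forces the middle term $L_{b}(A^{\sim},A^{\sim};(A^{\sim})^{\sim}_{n})$ to coincide with $\mathrm{Orth}(A^{\sim})$ as well. Thus all three classes of operators agree.

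There is essentially no obstacle here, since the three component theorems have already been established; the only thing to be careful about is citing the correct hypothesis at each stage (separating order dual for the inclusion chain, factorization property only for the equality step), and noting that Theorem \ref{theorem 2.7} is the single place where the factorization property is genuinely used. Since the result is purely formal at this point, I would write it as a short paragraph simply quoting Theorems \ref{theorem 3.1}, \ref{theorem 3.3} and \ref{theorem 2.7} in sequence and observing that the chain of inclusions becomes a chain of equalities when the two ends coincide.
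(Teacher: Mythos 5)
Your proposal is correct and matches the paper's own argument exactly: the paper also obtains the inclusion chain by quoting Theorems \ref{theorem 3.1} and \ref{theorem 3.3}, and then collapses it to equalities via Theorem \ref{theorem 2.7} under the factorization hypothesis. Nothing further is needed.
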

\section*{Acknowledgement}
The authors would like to thank the reviewer for his/her kind
comments and valuable suggestions which have improved this paper.
Corollary \ref{Corollary 2.3} in its present formulation and the
proof are essentially due to the reviewer. In particular, he/she
suggested that the authors should consider their questions under the
condition of ``$A$ has the factorization property ", which is weaker
than the hypothesis ``$A$ is square-root closed" used originally in
this paper.

\end{document}